\setlist[enumerate,itemize]{leftmargin=*}
\theoremstyle{plain}
\newtheorem{theorem}{Theorem}[section]
\newtheorem{lemma}[theorem]{Lemma}
\newtheorem{proposition}[theorem]{Proposition}
\newtheorem{remark}[theorem]{Remark}
\theoremstyle{definition}
\newtheorem{definition}{Definition}[section]
\DeclarePairedDelimiterX\abs[1]{\lvert}{\rvert}{\ifblank{#1}{\:\cdot\:}{#1}}
\DeclarePairedDelimiterX\braces[1]{\lbrace}{\rbrace}{\ifblank{#1}{\:\cdot\:}{#1}}
\title[Tree tilings and Cantor homeomorphisms]%
{Tilings of the infinite $p$-ary tree and Cantor homeomorphisms}
\author[A. Cobos]{Alberto Cobos}
\address{%
		School of Mathematics and Statistics,
		Hicks Building,
		Hounsfield Road,
		Sheffield S3 7RH,
		United Kingdom.
	   }
\email{acobosrabano1@sheffield.ac.uk}
\author[L.~M.~Navas]{Luis M. Navas}
\address{%
       Departamento de Matem\'aticas,
       Universidad de Salamanca,
       37008 Salamanca, Spain.}
\email{navas@usal.es}
\thanks{The research of the second author is supported by Grants PGC2018-099599-B-I00 and PGC2018-096504-B-C32 of the MICINN (Spain) and Grant J416/463AC03 of the JCyL}
\subjclass[2010]{54F65, 05B45, 37E25, 20E08}
\keywords{p-adic integers, tilings, trees, Cantor spaces}
\begin{document}


\begin{abstract}
We define a notion of tiling of the full infinite $p$-ary tree, establishing a series of equivalent criteria for a subtree to be a tile, each of a different nature; namely, geometric, algebraic, graph-theoretic, order-theoretic, and topological. We show how  these results can be applied in a straightforward and constructive manner to define homeomorphisms between two given spaces of $p$-adic integers, $\mathbb{Z}_{p}$ and $\mathbb{Z}_{q}$, endowed with their corresponding standard non-archimedean metric topologies.
\end{abstract}

\maketitle


\section{Introduction.}
\label{sec:introduction}

This paper arose out of curiosity regarding how easily one might write down an explicit homeomorphism between two given spaces of $p$-adic integers, $\mathbb{Z}_{p}$ and $\mathbb{Z}_{q}$, for two primes $p,q$, including auto-homeomorphisms when $p=q$. The existence of a homeomorphism follows from the fact that they are Cantor spaces (compact, metrizable, totally disconnected, perfect topological spaces). However, such maps cannot preserve the algebraic structure.\footnote{For example, there are no non-trivial additive group homomorphisms between $\mathbb{Z}_{p}$ and $\mathbb{Z}_{q}$ for distinct primes $p,q$, and no non-trivial ring endomorphisms of $\mathbb{Z}_{p}$.}

Robert in~\cite[Ch. 2]{Robert} describes various ways of mapping $\mathbb{Z}_{p}$ homeomorphically to a fractal subset of a Euclidean space $\mathbb{R}^{n}$, beginning with the well-known correspondence between $\mathbb{Z}_{2}$ and the classical Cantor set $C$, with a view towards geometric visualization. 

Of course, having fixed a homeomorphism $\mathbb{Z}_{p} \to \mathbb{Z}_{q}$, any other is given by composition with an auto-homeomorphism of $\mathbb{Z}_{p}$, so in principle we could just study these, and similarly, fixing homeomorphisms $\mathbb{Z}_{p} \to C$ and $\mathbb{Z}_{q} \to C$, we would be looking at the auto-homeomorphism group of $C$, which has been extensively studied in the literature, especially that relating to dynamical systems (e.g.~\cite{AKW,BernardesDarji}).

However, our interest was more down to earth, in the sense that we were looking to explicitly ``directly connect'' two different primes $p,q$, somewhat in the spirit of~\cite[\S 2.6]{Robert} which gives a map $\mathbb{Z}_{2} \times \mathbb{Z}_{2} \to \mathbb{Z}_{3}$, and to do so in as simple a manner as possible, whatever that might actually turn out to mean. A different example of an explicit auto-homeomorphism of $C$ can be found in~\cite{Kimura}.

The eventual solution was formulated, somewhat to our surprise, in terms of tilings of the full infinite $p$-ary tree $T_{p}$. That $T_{p}$ should be involved is not the surprising part, since the construction of $\mathbb{Z}_{p}$ as the inverse limit $\varprojlim \mathbb{Z}/p^{n}\mathbb{Z}$ under the mod $p^{n}$ reduction maps leads to Hensel's representation of a $p$-adic integer by an infinite series $\sum_{n=0}^{\infty} a_{n} p^{n}$ where $a_{n} \in \braces{0,1,\ldots,p-1}$ and geometrically this may be visualized as a path in $T_{p}$, in much the same way that the base-$p$ expansion of a real number determines a sequence of nested intervals collapsing to a point.

In the course of discovering and proving the main results, we found it useful to relate the geometry of such tilings to operations in the monoid of words over the alphabet with $p$ symbols, and finally to the special characteristics of the $p$-adic topology.

To our knowledge, the results presented here are new. A few similar constructions, arising from the relation of Cantor spaces to inverse limit spaces via sequences of nested refinements of a partition, can be found in~\cite{BernardesDarji}, although the theme of that paper is the graph-theoretic structure of the auto-homeomorphism group of $C$.

In much of the discussion, it is not necessary that $p$ be a prime, but when considering the topology, it is convenient to have the standard $p$-adic metric on hand to relate words with $p$-adic balls and partitions formed by them.

In Section~\ref{sec:tilings}, we give our definition and classification of tilings for the full infinite $p$-ary tree, which we hope will be interesting in itself. Our main result is Theorem~\ref{T:tile equivalence}, which establishes the equivalence of various properties which are \emph{a priori} of a different nature; namely, the geometric property of being a tile (Definition~\ref{D:tiling}), the graph-theoretic property of fullness, the order property of leaf-comparability (Lemma~\ref{L:leaf-comparabilty}), an algebraic property of unique factorization and finally, the topological property of being a partition (Theorem~\ref{T:tiling topological characterization}).

In Section~\ref{sec:homeomorphisms}, we interpret the main result in terms of the $p$-adic topology and use such tilings to fulfill our promise of a relatively easy construction of explicit homeomorphisms between $\mathbb{Z}_{p}$ and $\mathbb{Z}_{q}$. We will conclude with some examples, including auto-homeomorphisms of $\mathbb{Z}_{2}$.

\begin{figure}[ht]
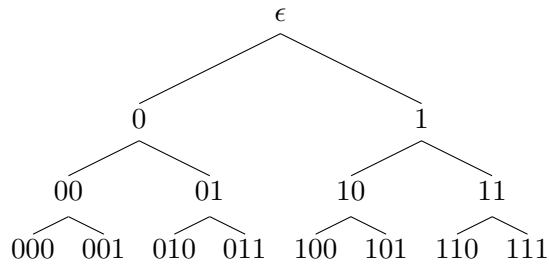

\begin{center}
\scalebox{1}{\Tree%
[
	.$\epsilon$ 	
	[
		.$0$ [.$00$ 
					[.$000$ ][.$001$ ]
			 ]
			 [
			  .$01$ 
					[.$010$ ][.$011$ ]
			 ]
	]
	[
		.$1$ [.$10$ 
					[.$100$ ][.$101$ ]
			 ]
			 [
			   .$11$
					[.$110$ ][.$111$ ]
			 ]
	]
]
}
\caption{The first few levels of the binary tree $T_{2}$, representing both  $\mathbb{Z}_{2}$ and the Cantor set $C$ (after mapping the ternary digit $2$ to $1$).}
\label{fig:binary tree}
\end{center}
\end{figure}

\section{Tilings of the full infinite $p$-ary tree}
\label{sec:tilings}

Given a graph $G$, let $V(G)$ and $E(G)$ denote its vertex and edge sets respectively. Since we will be concerned with rooted trees, we will also refer to nodes, children and parents, etc. In particular for a subtree $S$ we will denote by $L(S)$ its collection of \emph{leaves}, that is to say, the childless nodes of $S$. Recall that a $p$-ary tree is one in which each node has at most $p$ children. The case where every non-leaf node has the maximum possible number of children is particularly important.

\begin{definition}
\label{D:full}
A $p$-ary tree is \emph{full} if every node has either no children or the maximum number $p$ of possible children. In the latter case we shall say that the node \emph{splits completely.}
\end{definition}

We are primarily interested in the full infinite $p$-ary rooted tree $T_{p}$. For our purposes it will be convenient to identify the vertex set $V(T_{p})$ with the set of $p$-ary words $\Sigma_{p}^{*}$, consisting of finite strings over the alphabet $\Sigma_{p} = \braces{0,1,2,\dots,p-1}$, as we see in Figure~\ref{fig:binary tree} for $p=2$. Thus the set of edges $E(T_{p})$ consists of pairs $e = (w,wc)$ where $w \in \Sigma_{p}^{*}$ and $c \in \Sigma_{p}$ is a single character. The root of $T_{p}$ is the empty word $\epsilon$. We will then refer to depth or level in relation to $\epsilon$, considered as level $0$, and accordingly we may speak of the top or root node of a subtree.

It is well-known that $\Sigma_{p}^{*}$ is a cancellative monoid with respect to concatenation, denoted by juxtaposition, with $\epsilon$ as identity element. We will denote the length of a word $w$ by $\abs{w}$, with $\abs{\epsilon} = 0$. The length is a monoid morphism $\Sigma_{p}^{*} \to \mathbb{N}_{0}$, namely $\abs{vw} = \abs{v} + \abs{w}$.

Recall that $v$ is a prefix of $w$ if $w = v w'$ for some word $w'$, i.e. if $v$ is a left factor of $w$. The empty word is a prefix of every word. The prefix relation is a partial order on words which we will denote by $\leq$. 

In our model of $T_{p}$, the prefix relation coincides with the ancestor relation, namely $v \leq w$ if and only if $v$ is an ancestor of $w$, or equivalently $w$ is a descendant of $v$. In particular, if $w = c_{0}c_{1} \cdots c_{n-1} \in \Sigma_{p}^{*}$, the unique path from $w$ to the root node $\epsilon$ is obtained by removing one character at a time from $w$ proceeding from right to left: $c_{0} \cdots c_{n-1} \to c_{0} \cdots c_{n-2} \to \ldots \to c_{0} \to \epsilon$ (see Figure~\ref{fig:binary tree}).

Note that $\Sigma_{p}^{*}$ acts on $T_{p}$ on the left via prefixing, i.e. left multiplication on itself as vertex set, with the induced action on edges, $w (v,v') = (w v, w v')$. We will refer to this action as left translation or simply translation.

\begin{definition}
\label{D:almost disjointness}
We will say that two subgraphs of a graph are \emph{almost disjoint} if they do not share a common edge.

\end{definition}

\begin{definition}
\label{D:tiling}
A \textit{tiling} of the full $p$-ary infinite tree $T_{p}$ is a pair $(S,\Omega)$ consisting of:
\begin{itemize}
	
	\item A ``tile'' $S$, which is a nontrivial ($E(S) \neq \varnothing$) finite subtree of $T_{p}$.
	
	\item A subset $\Omega \subseteq \Sigma_{p}^{*}$ such that $T_{p}$ is the almost disjoint union of the translates $\omega S$ for $\omega \in \Omega$. We will denote this by
	\[
	T_{p} = \bigsqcup_{\omega \in \Omega} \omega S.
	\]
\end{itemize}
Note that this means that both every vertex of $T_{p}$, i.e. every word $w$, is of the form $w = \omega s$ where $\omega \in \Omega$ and $s \in V(S)$, and also every edge of $T_{p}$ is of the form $\omega e$ where $\omega \in \Omega$ and $e \in E(S)$.
\end{definition}

\begin{figure}[h]
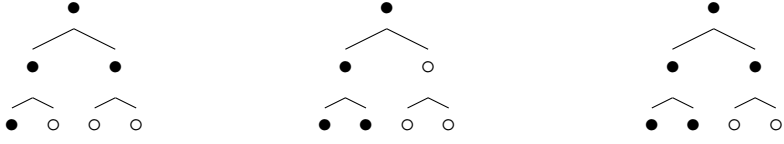

	\begin{minipage}{0.32\textwidth}
		\begin{center}
			\scalebox{1}{\Tree[.$\bullet$ [.$\bullet$ [.$\bullet$ ][.$\circ$ ]]
				[.$\bullet$ [.$\circ$ ][.$\circ$ ]]]}
		\end{center}	
	\end{minipage}%
	\begin{minipage}{0.33\textwidth}
		\begin{center}
			\scalebox{1}{\Tree[.$\bullet$ [.$\bullet$ [.$\bullet$ ][.$\bullet$ ]]
				[.$\circ$ [.$\circ$ ][.$\circ$ ]]]}
		\end{center}	
	\end{minipage}
	\begin{minipage}{0.33\textwidth}
		\begin{center}
			\scalebox{1}{\Tree[.$\bullet$ [.$\bullet$ [.$\bullet$ ][.$\bullet$ ]]
				[.$\bullet$ [.$\circ$ ][.$\circ$ ]]]}
		\end{center}	
	\end{minipage}
	\caption{The first two trees are not tiles, while the third one is.}
\end{figure}

\begin{lemma}
\label{L:tile closed prefixes}
Given a tile $(S,\Omega)$, $S$ is a subtree rooted at $\epsilon$ and is therefore closed under prefixes, namely if $v \in V(S)$ and $w \leq v$ then $w \in V(S)$.
\end{lemma}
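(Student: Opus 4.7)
The plan is to prove the statement in two stages: first establish that the root $\epsilon$ belongs to $V(S)$, and then deduce prefix-closure from the fact that $S$ is a connected subgraph of the tree $T_{p}$ containing $\epsilon$. Both steps rely on elementary properties of the free monoid $\Sigma_{p}^{*}$ and of paths in trees; there is no substantive obstacle, only a careful unpacking of the definitions.

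For the first stage, the tiling identity $T_{p} = \bigsqcup_{\omega \in \Omega} \omega S$ covers every vertex of $T_{p}$, in particular $\epsilon$. Hence there exist $\omega \in \Omega$ and $s \in V(S)$ with $\epsilon = \omega s$. Applying the length morphism $\abs{\,\cdot\,} : \Sigma_{p}^{*} \to \N_{0}$ gives $\abs{\omega} + \abs{s} = 0$, which forces $\omega = s = \emptyword$. In particular $\emptyword \in V(S)$ (and incidentally $\emptyword \in \Omega$), so $S$ is rooted at $\emptyword$.

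For the second stage, let $v \in V(S)$ and $w \leq v$, so $v = w w'$ for some $w' \in \Sigma_{p}^{*}$. Writing $v = a_{0} a_{1} \cdots a_{n-1}$, the unique path from $v$ to $\emptyword$ in $T_{p}$ is obtained by successively deleting one character from the right, as recalled in the discussion preceding \cref{fig:binary tree}; its intermediate vertices are precisely the prefixes of $v$, and in particular $w$ lies on this path. Since $S$ is a subtree, it is a connected subgraph of the tree $T_{p}$, and because $T_{p}$ is a tree the path in $S$ between any two vertices of $S$ coincides with the unique path in $T_{p}$. Applying this to $v$ and $\emptyword$, both of which lie in $V(S)$ by the first stage and by hypothesis, we conclude that the entire prefix chain of $v$ lies in $V(S)$; in particular $w \in V(S)$, as required.
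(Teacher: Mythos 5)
Your proof is correct and follows essentially the same route as the paper's: extract $\epsilon = \omega s$ from the tiling to conclude $\epsilon \in \Omega \cap V(S)$, then use connectedness of the subtree $S$ together with uniqueness of paths in $T_{p}$ to pull the whole prefix chain of $v$ into $V(S)$. The only difference is that you spell out the length-morphism argument and the path-uniqueness step, which the paper leaves implicit.
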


\begin{proof}
In a tiling, since the empty word $\epsilon$ must also have the form $\omega s$, we must have $\epsilon \in \Omega \cap V(S)$. In particular a tile $S$ is necessarily a subtree rooted at $\epsilon$, and therefore for any $v \in V(S)$, the unique path from $v$ to the root node is contained in $S$. In terms of words, this means $S$ is closed under prefixes. 
\end{proof}

\begin{lemma}
\label{L:leaf-comparabilty}
For a nontrivial subtree $S$ of $T_{p}$ which is rooted at $\epsilon$, the following properties are equivalent:
\begin{enumerate}[label=(1\alph*)]
	
\item 
\label{item:leaf prefixes or prefixed}
For any word $w \in \Sigma_{p}^{*}$, there is some $\ell \in L(S)$ such that either $w \leq \ell$ or $\ell \leq w$.

\item 
\label{item:nonleaf vertex or leaf prefixes}
For any word $w \in \Sigma_{p}^{*}$, either $w \in V(S) \setminus L(S)$ or $\ell \leq w$ for some $\ell \in L(S)$.

\end{enumerate}
A tree $S$ satisfying these equivalent properties will be called \textit{leaf-comparable}\footnote{The terminology is explained by the fact that property~\ref{item:leaf prefixes or prefixed} says that every word is comparable to an element of $L(S)$ with respect to the partial order $\leq$.}.
\end{lemma}

\begin{proof}
If $\ell \leq w$ for some $\ell \in L(S)$, there is nothing to show; otherwise, observe that by Lemma~\ref{L:tile closed prefixes}, if $w < \ell$ for some $\ell \in L(S)$, then $w \in V(S) \setminus L(S)$ and conversely, if $w \in V(S) \setminus L(S)$ then, since $S$ is a finite subtree, any path in $S$ starting at $w$ and formed by succesive descendants eventually ends in a leaf $\ell$, in which case $w < \ell$.
\end{proof}

\begin{theorem}
\label{T:tile equivalence}
For a nontrivial finite subtree $S$ of $T_{p}$ which is rooted at $\epsilon$, the following properties are equivalent:
\begin{enumerate}
	
\item 
\label{item:tile}
$S$ is a tile in some tiling $(S,\Omega)$.

\item
\label{item:fullness}
$S$ is full, i.e., all $p$ child nodes of any non-leaf node of $S$ belong to $S$.

\item
\label{item:leaf-comparable}
$S$ is leaf-comparable, i.e. for any word $w$ there is a leaf $\ell$ of $S$ such that either $\ell \leq w$ or $w \leq \ell$.

\item
\label{item:unique factorization}
Any $p$-ary word $w$ has a unique factorization $w = \ell_{1} \cdots \ell_{n} s$, where $\braces{\ell_{1}, \ell_{2}, \ldots, \ell_{n}}$ is a (possibly empty) set  of leaves of $S$ and $s$ is a non-leaf node of $S$.
	
\end{enumerate}
Moreover, if these properties hold, then $\Omega$ may be chosen to be the submonoid $\Lambda$ of $\Sigma_{p}^{*}$ generated by the set $L(S)$ of leaves of $S$, that is, $(S,\Lambda)$ is a tiling.
\end{theorem}

\begin{proof}
\leavevmode	
\begin{itemize}
		
\item \eqref{item:tile}~$\implies$~\eqref{item:fullness}:
Suppose that $S$ is a tile which is not full. Then some non-leaf node $s \in  V(S) \setminus L(S)$ does not split completely in $S$, i.e. $sc \notin V(S)$ for some character $c \in \Sigma_{p}$. Choose such an $s$ having minimal length. Thus, if $\sigma \in V(S) \setminus L(S)$ with $\abs{\sigma} < \abs{s}$ then $\sigma$ must split completely in $S$.
		
Consider the edge $(s,sc) \in E(T_{p})$. Since $S$ is a tile, there is some $\omega \in \Omega$ and an edge $(\sigma,\sigma c') \in E(S)$ such that $(s,sc) = \omega(\sigma,\sigma c')$. Then $c = c'$ and $s = \omega \sigma$ with $(\sigma,\sigma c ) \in E(S)$. Since $sc \notin V(S)$, we have $(s,sc) \notin E(S)$ and in particular $\omega \neq \epsilon$. Hence $\abs{\sigma} < \abs{s}$. In addition, since $\sigma c \in V(S)$, $\sigma$ is not a leaf, so $\sigma \in V(S) \setminus L(S)$. Thus $\sigma \Sigma_{p} \subseteq V(S)$. On the other hand, since also $s \in V(S) \setminus L(S)$, $s$ has \emph{some} child, i.e. $sb \in V(S)$ for some $b \in \Sigma_{p}$, so $(s,sb) \in E(S)$. But then $(s,sb) = \omega(\sigma,\sigma b) \in E(S) \cap \omega E(S)$. However, since $\omega \neq \epsilon$, the subtrees $S$ and $\omega S$ are almost disjoint. Thus we arrive at a contradiction which implies that $S$ is indeed full.		
		
\item \eqref{item:fullness}~$\implies$~\eqref{item:leaf-comparable}:
Assume that $S$ is full. Let $w= c_0c_1\cdots c_n \in \Sigma_{p}^{*}$ with each $c_{i} \in \Sigma_{p}$. Recall that $\epsilon \to c_0\to c_0c_1 \to \ldots \to c_0c_1\cdots c_n = w$ is the unique path form $\epsilon$ to $w$. Then $S$ being nontrivial and full ensure that $c_0\in V(S)$. We conclude if $c_0$ is a leaf; otherwise fullness implies  $c_0c_1 \in V(S)$. Arguing recursively, we are left with the case that all of the $c_0c_1\cdots c_i$, with $i < n$, lie in $V(S) \setminus L(S)$. But then $w \in V(S)$,
and there is a path $w c_{n+1} c_{n+2} \cdots$ in $S$ which, since $S$ is finite, eventually exits $S$ at a node with no children, i.e. at a leaf, of which $w$ is a prefix. Thus $S$ is leaf-comparable.

\item \eqref{item:leaf-comparable}~$\implies$~\eqref{item:unique factorization}:
For the existence of the factorization, we may assume that $w \notin V(S) \setminus L(S)$, since otherwise the result is trivial. Since $S$ is leaf-comparable, by Lemma~\ref{L:leaf-comparabilty}~\ref{item:nonleaf vertex or leaf prefixes}
there is some leaf $\ell_{1}$ of $S$ such that $\ell_{1} \leq w$. Thus $w = \ell_{1} w_{1}$ for some word $w_{1}$. If $w_{1} \in V(S) \setminus L(S)$, we are done, otherwise there is some $\ell_{2} \in L(S)$ such that $\ell_{2} \leq w_{1}$ and then $w = \ell_{1} \ell_{2} w_{2}$ for some $w_{2}$. Since this process cannot continue ad infinitum, we eventually arrive at a factorization $w = \ell_{1} \cdots \ell_{n} s$ with $\ell_{i} \in L(S)$ and $s \in V(S) \setminus L(S)$.

For uniqueness, suppose we have two factorizations
\begin{equation}
\label{E:factorizations}
	\ell_{1} \cdots \ell_{n} s_{1} = \ell_{1}' \cdots \ell_{m}' s_{2},
	\tag{$\ast$}
\end{equation}
where $n,m \geq 0$, each $\ell_{i}, \ell'_{j} \in L(S)$, and each $s_{j} \in V(S) \setminus L(S)$. We imitate the proof of unique factorization of integers into primes.
If $m = n = 0$ the conclusion is obvious. Suppose now that $m,n \geq 1$. Without loss of generality, we may assume that $\abs{\ell_{1}} \leq \abs{\ell_{1}'}$. Comparing the leftmost parts of both sides of~\eqref{E:factorizations}, we conclude that in fact $\ell_{1} \leq \ell_{1}'$ and, since both are leaves of $S$, $\ell_{1} = \ell_{1}'$. Cancelling these terms leaves a shorter relation. We must have $n = m$, otherwise eventually we would arrive at a relation of the form $s_{1} = \ell_{1}'' \cdots \ell_{k}'' s_{2}$ or $s_{2} = \ell_{1}'' \cdots \ell_{k}'' s_{1}$ with $k \geq 1$ and $\ell_{j}'' \in L(S)$, which also correspond to the cases where $m = 0$ or $n = 0$ but not both. In the first case $\ell_{1}'' \leq s_{1}$, which, since $\ell_{1}'' \in L(S)$ and $s_{1} \in V(S)$, can only hold if $\ell_{1}'' = s_{1}$, contradicting $s_{1} \notin L(S)$, and similarly in the second case. Thus $n = m$ in all cases and successive cancellations lead to $\ell_{j} = \ell_{j}'$ for all $j$ and $s_{1} = s_{2}$.

\item \eqref{item:unique factorization}~$\implies$~\eqref{item:leaf-comparable}:
This only depends on the existence of the factorization. Given a word $w$, if its factorization begins with some leaf $\ell$, then $\ell \leq w$. Otherwise $w \in V(S) \setminus L(S)$, in which case following a path downward (away from the root node $\epsilon$) from $w$ we eventually exit $S$ at a leaf node $\ell$, hence $w \leq \ell$.

\item \eqref{item:leaf-comparable}~$\implies$~\eqref{item:fullness}:
Suppose that $S$ is leaf-comparable but not full. Let $s$ be a non-leaf node of $S$ which does not split completely. Suppose $c \in \Sigma_{p}$ with $w = sc \notin V(S)$. If $w \leq \ell$ for some leaf $\ell$ of $S$, then the unique path from $s$ to $\ell$ is not contained in $S$, contradicting connectedness. If $\ell \leq w$, then in fact $\ell \leq s$, but this forces $\ell = s$, contradicting that $s$ is not a leaf. Thus $w$ is a word which is not comparable to any leaf, contradicting~\eqref{item:leaf-comparable}.

\item \eqref{item:unique factorization}~$\implies$~\eqref{item:tile}:
Finally, we show that if unique factorization holds, then in fact $(S,\Lambda)$ is a tiling, where $\Lambda$ is the submonoid of $\Sigma_{p}^{*}$ generated by $L(S)$.

Let $w \in \Sigma_{p}^{*}$ be any word and consider any edge $(w,wc) \in E(T_{p})$, with $c \in \Sigma_{p}$. We have already seen that $S$ must be full. Write $w = \lambda s$ with $\lambda \in \Lambda$ and $s \in V(S) \setminus L(S)$. Fullness implies $sc \in V(S)$, hence $w \in \lambda V(S)$ and $(w,wc) = \lambda(s,sc) \in \lambda E(S)$.
		
It remains to show that the translates $\lambda S$ are almost disjoint. If $\alpha,\beta \in \Lambda$ with $\alpha E(S) \cap \beta E(S) \neq \varnothing$, there are edges $(s_{j},s_{j}c_{j}) \in E(S)$ for $j=1,2$, such that $\alpha (s_{1},s_{1} c_{1}) = \beta (s_{2}, s_{2} c_{2})$. Then $s_{j} \in V(S) \setminus L(S)$ and $\alpha s_{1} = \beta s_{2}$. By uniqueness of the factorization, we must have $\alpha = \beta$. 
\qedhere
\end{itemize}
\end{proof}

We end this section by showing that, although in the definition of a tiling $(S,\Omega)$ (Definition~\ref{D:tiling}), $\Omega$ is only assumed to be a subset of the monoid of words $\Sigma_{p}^{*}$, in fact $\Omega$ is uniquely determined by $S$ as its ``monoid of leaves'', namely, the submonoid $\Lambda$ of $\Sigma_{p}^{*}$ generated by the set $L(S)$ of leaves of $S$.

\begin{proposition}
\label{P:monoid is unique}
If $(S,\Omega)$ is a tiling of $T_{p}$, then in fact $\Omega$ must be the submonoid $\Lambda$ of $\Sigma_{p}^{*}$ generated by the set $L(S)$ of leaves of $S$.
\end{proposition}

\begin{proof}
Let $\Lambda$ be the submonoid of $\Sigma_{p}^{*}$ generated by the leaves of $S$. By Theorem~\ref{T:tile equivalence}, we know that $T_{p} = \bigsqcup_{\lambda \in \Lambda} \lambda S$. We have to show that $\Lambda$ is the only \emph{subset} of $\Sigma_{p}^{*}$ that satisfies this.
\begin{itemize}

\item It is enough to show that $\Omega \subseteq \Lambda$. Assume that this is the case. Then given $\lambda \in \Lambda$, choose any character $c \in \Sigma_{p}$. There is some $\omega \in \Omega$ such that $(\lambda,\lambda c) \in \omega E(S)$. Write $(\lambda,\lambda c) = \omega (s,sb)$ with $(s,sb) \in E(S)$. Thus $s \in V(S) \setminus L(S)$. Then $\lambda = \omega s$ and by unique factorization (Theorem~\ref{T:tile equivalence}~\eqref{item:unique factorization}), we must have $s = \epsilon$, so that $\lambda = \omega \in \Omega$. Hence $\Lambda \subseteq \Omega$.

\item $\Omega \subseteq \Lambda$: assume the contrary and choose $\omega \in \Omega \setminus \Lambda$ with minimal length. We have $\omega = \lambda s$ for some $\lambda \in \Lambda$ and $s \in V(S)$. By assumption, $s \notin L(S)$ and $s \neq \epsilon$.

Let us show that $\lambda \in \Omega$. Consider any edge $(\lambda,\lambda c) \in E(T_{p})$. Let $\omega' \in \Omega$ such that $(\lambda, \lambda c) = \omega'(\sigma, \sigma c')$ with $(\sigma,\sigma c') \in E(S)$. Then $c = c'$ and $\lambda = \omega' \sigma$. Now $\abs{\omega'} \leq \abs{\lambda} < \abs{\omega}$, so $\omega' \in \Lambda$ by minimality. Since $\lambda, \omega' \in \Lambda$ and $\sigma \in V(S) \setminus L(S)$, the relation $\lambda = \omega' \sigma$ implies by unique factorization that $\sigma = \epsilon$ and $\lambda = \omega' \in \Omega$.

Now we have $\omega = \lambda s$ with $\lambda \in \Lambda \cap \Omega$ and $s \in V(S) \setminus L(S)$. Thus there is some $c \in \Sigma_{p}$ with $(s,sc) \in E(S)$. But then, since $S$ is a tile, and hence full by Theorem~\ref{T:tile equivalence}, $(\omega,\omega c) = \lambda (s, sc) = \omega (\epsilon,c) \in \lambda E(S) \cap \omega E(S)$. Since $\lambda, \omega \in \Omega$, almost disjointness implies that $\omega = \lambda \in \Lambda$, contradicting the choice of $\omega$. Thus $\Omega \subseteq \Lambda$.
\qedhere
\end{itemize}
\end{proof}


\section{Homeomorphisms of $p$-adic spaces}
\label{sec:homeomorphisms}

A \emph{Cantor space} is a compact, metrizable, totally disconnected, perfect topological space. The classification theorem for Cantor spaces is due to Brouwer, who proved that all Cantor spaces are homeomorphic to the classical Cantor set~\cite{Brouwer}. In particular, any two Cantor spaces are homeomorphic.

Brouwer's Theorem can be proved in a fairly constructive way by associating an infinite tree to a given Cantor space. The construction may be understood in two steps, the first of which is to exhibit a homeomorphism of a given Cantor space with a tree, and then using this to relate two given Cantor spaces to each other.

Assigning a tree to a Cantor space relies on the existence of finite clopen partitions of its nonempty clopen subsets.\footnote{By partition we mean a subdivision into pairwise disjoint \emph{nonempty} subsets, and ``clopen partition'' refers to the sets in the partition being themselves clopen.} Such a set has a clopen partition with any given number $n$ of elements. Choosing a large enough $n$, the diameters (with respect to a metric inducing the topology) of the sets in the partition may be taken to be  arbitrarily small.

With this in mind, for a given Cantor space $X$, one can recursively construct a
sequence of finite clopen partitions $\mathcal{U}_{n}$ such that each $\mathcal{U}_{n}$
refines $\mathcal{U}_{n-1}$ and the maximum diameter of the sets in $\mathcal{U}_{n}$
tends to $0$ as $n \to \infty$. A point $x \in X$ then has a unique associated
``container sequence'' $(U_{0},U_{1},\ldots)$ such that $x \in U_{n} \in
\mathcal{U}_{n}$, with $\braces{x} = \bigcap_{n=0}^{\infty} U_{n}$. Considering each
$\mathcal{U}_{n}$ as a finite topological space with the discrete topology, the inverse
limit space $\mathcal{U}_{\infty} = \varprojlim \mathcal{U}_{n}$ with respect to the
container maps $c_{n} : \mathcal{U}_{n} \to \mathcal{U}_{n-1}$ sending a set $U \in
\mathcal{U}_{n}$ to its unique container $V \in \mathcal{U}_{n-1}$, is homeomorphic to
$X$. Clearly $\mathcal{U}_{\infty}$ is an infinite tree, with vertices the sets in the
partitions and edges given by the containment relations among these.

 
Given two Cantor spaces $X$, $Y$, we can recursively construct two such sequences of partitions, one for each space respectively, not only having the same number of elements at each step, but also isomorphic inclusion relations, i.e., the partitions have associated graphs which are isomorphic, and hence define a homeomorphism of $X$ and $Y$ by choosing compatible bijections between the partitions at each level.\footnote{The details of what we are summarizing here may be found in Willard's well-known textbook~\cite{Willard}, for example.}

The $p$-adic integers $\mathbb{Z}_{p}$, for a given prime $p$, provide concrete examples of Cantor spaces having additional algebraic structure. From our point of view, they are convenient because the $p$-adic metric on the integers is easy to describe, and the correspondence between Cantor spaces and trees can be easily seen to pair $\mathbb{Z}_{p}$ with the full infinite $p$-ary tree $T_{p}$. The results of Section~\ref{sec:tilings}  yield a constructive method for obtaining homeomorphisms between $\mathbb{Z}_{p}$ and $\mathbb{Z}_{q}$ for any pair of primes $p,q$. Thus, from now on we shall assume that $p$ is prime for simplicity.\footnote{Note that here we are disregarding the algebraic structure. There are no non-trivial additive group homomorphisms $\mathbb{Z}_{p} \to \mathbb{Z}_{q}$. Much of what we say is valid for $p$-ary trees for any \emph{integer} $p$, but the $p$-adic metric for prime $p$ is easier to describe. See Remark~\ref{R:pq nonprime}.}

\begin{remark}
\label{R:pq nonprime}
The reader may have noticed that the primality of $p$ is not necessary. Indeed everything holds for $a$-ary trees where $a$ is a natural number $\geq 2$. The Chinese Remainder Theorem and general properties of inverse limits show that $\mathbb{Z}_{a} = \varprojlim \mathbb{Z}/a^{n}\mathbb{Z}$ is isomorphic to $\prod_{p \mid a} \mathbb{Z}_{p}$ over prime divisors of $a$. In particular, $\mathbb{Z}_{p^{\ell}}$ is isomorphic to $\mathbb{Z}_{p}$ for primes $p$. The latter isomorphism corresponds to considering a $p^{\ell}$-adic digit as a block of $\ell$ $p$-adic digits. On trees this means mapping a given $p^{\ell}$-adic subtree to the $p$-adic subtree determined by expanding into blocks and filling in with all ancestors. In this sense the prime case is enough to understand the rest.
\end{remark}

A well-known method of proving Brouwer's Theorem and other universality results involving the Cantor set is to use inverse limit spaces and may be found in many sources, for example~\cite{Willard}. Following the proof, which we have sketched above, we will construct such an explicit homeomorphism by means of tilings of the trees $T_{p}$ and $T_{q}$. Let us start with the topological interpretation of the results from section~\ref{sec:tilings}.

To a word $w = a_{0}a_{1} \cdots a_{n-1} \in \Sigma_{p}^{*}$ we associate the natural number $N(w) = a_{0} + a_{1} p + \cdots + a_{n-1} p^{n-1} \in [0,p^{n})$ and the $p$-adic ball $\mathbb{B}_{w} = B(N(w), p^{-\abs{w}+1})$ in $\mathbb{Z}_{p}$. The empty word $\epsilon$ is assigned the ball $\mathbb{B}_{\epsilon} = \mathbb{Z}_{p}$. It can be easily shown that every $p$-adic ball is of the form $\mathbb{B}_{w}$ for some word $w \in \Sigma_{p}^{*}$ (see~\cite{Robert}).

The relevance of word operations to the problem of constructing covers by balls is a consequence of the following simple observation.

\begin{lemma}
\label{L:to contain is to prefix}
For two words $v,w \in \Sigma_{p}^{*}$, $v \leq w$ if and only if $\mathbb{B}_{v} \supseteq \mathbb{B}_{w}$. In particular, $\mathbb{B}_{v} \cap \mathbb{B}_{w} \neq \varnothing$ if and only if $v$ and $w$ are comparable, this is, either $v \leq w$ or $w \leq v$.
\end{lemma}

\begin{proof}
Let $v = a_0a_1\cdots a_r$ and $w = b_0b_1\cdots b_s$ in $\Sigma_{p}^{*}$. Since every point of a $p$-adic ball is a center, the inclusion $B(N(w),p^{-m+1}) \subseteq B(N(v), p^{-n+1})$ holds iff $\abs{N(w) - N(v)}_{p} < p^{-n+1}$ and $B(N(w),p^{-m+1}) \subseteq B(N(w), p^{-n+1})$ iff $n \leq m$ and $N(w) \equiv N(v) \bmod p^{n}$ iff $n \leq m$ and  $a_{i} = b_{i}$ for $0 \leq i \leq n-1$, i.e. $v \leq w$. The particular case follows from the fact that two non-disjoint $p$-adic balls satisfy an inclusion relation.
\end{proof}

\begin{lemma}
\label{L:leaves cover equivalent to words version}
A nontrivial subtree $S$ of $T_{p}$ which is rooted at $\epsilon$ is leaf-comparable if and only if the balls $\braces{\mathbb{B}_{\ell} : \ell \in L(S)}$ corresponding to the leaves of $S$ form a partition of $\mathbb{Z}_{p}$.
\end{lemma}

\begin{proof}
	Assume that $\braces{\mathbb{B}_{\ell} : \ell \in L(S)}$ is a partition of $\mathbb{Z}_{p}$ and let us show that the property~\ref{item:leaf prefixes or prefixed} from Lemma~\ref{L:leaf-comparabilty} holds. Let $w \in \Sigma_{p}^{*}$. Since $\mathbb{B}_{w} \subseteq \mathbb{Z}_{p} = \bigsqcup_{\ell \in L(S)} \mathbb{B}_{\ell}$, we must have $\mathbb{B}_{w} \cap \mathbb{B}_{\ell} \neq \varnothing$ for some $\ell \in L(S)$, hence either $w \leq \ell$ or $\ell \leq w$.
	
Conversely, the union $U = \bigsqcup_{\ell \in L(S)} \mathbb{B}_{\ell}$ is a clopen subset of $\mathbb{Z}_{p}$. If $U \subsetneq \mathbb{Z}_{p}$, then $\mathbb{B}_{w} \cap U = \varnothing$ for some $w \in \Sigma_{p}^{*}$. Since $\mathbb{B}_{w} \cap \mathbb{B}_{\ell} = \varnothing$ for every $\ell \in L(S)$, neither $w \leq \ell$ nor $\ell \leq w$ for every leaf $\ell \in L(S)$.
\end{proof}

\begin{theorem}
\label{T:tiling topological characterization}
For a nontrivial subtree $S$ of $T_{p}$ which is rooted at $\epsilon$, the following properties are equivalent:
\begin{enumerate}[label=(\arabic*)]

\item 
\label{item:S is tile}
$S$ is a tile in some tiling of $T_{p}$.

\item
\label{item:leaves are partition}
The balls $\braces{\mathbb{B}_{\ell} : \ell \in L(S)}$ form a partition of $\mathbb{Z}_{p}$.

\end{enumerate}
\end{theorem}

\begin{proof}
This is an immediate corollary of Lemma~\ref{L:leaves cover equivalent to words version} and Theorem~\ref{T:tile equivalence}.
\end{proof}

We end this section by showing how these equivalences allow us to build an inverse system of partitions from a given tile and, in turn, to construct a homeomorphism between $\mathbb{Z}_{p}$ and $\mathbb{Z}_{q}$ based on two systems of partitions with isomorphic inclusion relations.

Consider now a tile $S$ of $T_{p}$. By Theorem~\ref{T:tiling topological characterization}, the set of balls corresponding to leaves, $\braces{\mathbb{B}_{\ell} : \ell \in L(S)}$, form a partition of $\mathbb{Z}_p$ with cardinality $\# L(S) = 1+(p-1)s$ for some $s\in \mathbb{Z}_{>0}$. This follows because $S$ must be full, and a full subtree can be built recursively by choosing at each step which old leaves split completely to form $p$ new leaves at the next level. Each lost old leaf gives rise to $p$ new leaves, thus adding $p-1$ leaves to the overall count. Here $s$ is the number of nodes which have split during this process and also the final number of non-leaf nodes in $S$: $s = \#(V(S)\setminus L(S))$.

The translates of $S$ by the monoid $\Lambda$ generated by $L(S)$ form a tilling of $\mathbb{Z}_{p}$, and it is easy to see that for each $n\geq 1$ the tree $S_{n} \coloneqq \braces{\ell_1\ell_2\cdots \ell_iw \colon 1\leq i < n, \ell_j \in L(S), w \in S}$ is again a tile of $\mathbb{Z}_{p}$ with $L(S_{n}) = \braces{\ell_1\cdots \ell_{n} \colon \ell_1, \ldots, \ell_{n}\in L(S)}$ by unique factorization (Theorem~\ref{T:tile equivalence}), in particular $\# L(S_n) = (\# L(S))^{n}$. In terms of balls, this means that the partition $\braces{\mathbb{B}_{\lambda} : \lambda \in L(S_{n+1})}$ of $\mathbb{Z}_{p}$ is obtained from $\braces{\mathbb{B}_{\lambda} : \lambda \in L(S_n)}$ by subdividing each ball $\mathbb{B}_{\lambda}$ with $\lambda \in L(S_n)$ into $\# L(S)$ smaller balls, namely into $\braces{\mathbb{B}_{\lambda\ell} \colon \ell \in L(S)}$.

Suppose that we are given primes $p$ and $q$ and tiles $S$ of $T_{p}$ and $S'$ of $T_{q}$ such that $\# L(S) = \# L(S')$. This is always possible since the Diophantine equation $1+(p-1)s = 1+(q-1)s'$ always admits a solution for integers $p,q$. Let us choose a bijection $f\colon L(S)\to L(S')$. Then $f$ clearly extends to a bijection $f_n\colon L(S_n) \to L(S'_n)$ for all $n\geq 1$ by setting $f_n(\ell_1\ell_2\cdots\ell_n) \coloneqq f(\ell_1)f(\ell_2)\cdots f(\ell_n)$, which in turn defines bijections $g_n$ between the partitions $\braces{\mathbb{B}_{\ell} : \ell \in L(S_n)}$ of $\mathbb{Z}_p$ and $\braces{\mathbb{B}_{\ell'} : \ell' \in L(S'_n)}$ of $\mathbb{Z}_q$ for all $n\geq 1$ by setting $g_n(\mathbb{B}_{\ell}) \coloneqq \mathbb{B}_{f_n(\ell)}$.

Notice that the bijections $g_n$ preserves inclusions, namely given $\ell \in L(S_n)$ and $k\in L(S_{n+1})$, then by Lemma~\ref{L:leaves cover equivalent to words version} we have that
\[
\mathbb{B}_k \subseteq \mathbb{B}_{\ell} \iff k\geq \ell \iff f_{n+1}(k)\geq f_n(\ell) \iff g_{n+1}(\mathbb{B}_k) \subseteq g_n(\mathbb{B}_{\ell}).
\]
This property ensures that the bijections $g_n$ define a homeomorphism between $\mathbb{Z}_p$ and $\mathbb{Z}_q$.

\section{Examples}
\label{sec: examples}

We conclude with several examples of tilings inducing homeomorphisms between $p$-adic spaces, applying the results of the previous section and giving the corresponding action on digits.

We begin by exhibiting a concrete homeomorphism between $\mathbb{Z}_2$ and $\mathbb{Z}_3$ arising from the tiles $S$ and $S'$ shown in Figures~\ref{fig: tile S of Z_2} and~\ref{fig: tile S' of Z_3}. The sets of leaves are $L(S) = \braces{\lambda_1 = 00, \lambda_2 = 01, \lambda_3 = 1}\subset \Sigma_{2}^{*}$ and $L(S') = \braces{\mu_1 = 0, \mu_2 = 1, \mu_3 = 2} \subset \Sigma_{3}^{*}$. Figures~\ref{fig: S tilling} and \ref{fig: S' tilling} shows how the translates of $S$ and $S'$ by the monoids $\Lambda$ and $\Lambda'$ generated by the respective sets of leaves $L(S)$ and $L(S')$, cover the trees $T_2$ and $T_3$ respectively.

\begin{figure}[h]
	\centering
	\begin{minipage}{.5\textwidth}
		\centering
		\centering
		\scalebox{0.8}{\Tree[.$\epsilon$ 	[.$v$ [.$\lambda_1$ ][.$\lambda_2$ ]]
			[.$\lambda_3$ ]]}
		\caption{A tile $S$ of $T_{2}$.}
		\label{fig: tile S of Z_2}
	\end{minipage}%
	\begin{minipage}{.5\textwidth}
		\centering
		\scalebox{1}{\Tree[.$\epsilon$ 		[.$\mu_1$ ]
			[.$\mu_2$ ]
			[.$\mu_3$ ]]}
		\caption{A tile $S'$ of $T_{3}$.}
		\label{fig: tile S' of Z_3}
	\end{minipage}
\end{figure}

\begin{center}
	\begin{figure}[h]
		\scalebox{0.8}{\Tree[.$\epsilon$ 	[.$v$ [.$\lambda_1$ [.$\lambda_1v$ [.$\lambda_1\lambda_1$ ][.$\lambda_1\lambda_2$ ]][.$\lambda_1\lambda_3$ ]][.$\lambda_2$ [.$\lambda_2v$ [.$\lambda_2\lambda_1$ ][.$\lambda_2\lambda_2$ ]][.$\lambda_2\lambda_3$ ]]]
			[.$\lambda_3$ [.$\lambda_3v$ [.$\lambda_3\lambda_1$ ][.$\lambda_3\lambda_2$ ]][.$\lambda_3\lambda_3$ ]]]}
		\caption{Part of the tilling of $T_{2}$ by $S$}
		\label{fig: S tilling}
	\end{figure}
\end{center}\vspace{0.5cm}

\begin{figure}[h]
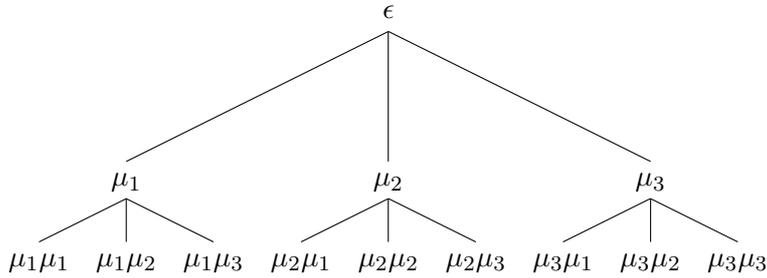

	\centering
	\scalebox{1}{\Tree[.$\epsilon$ 		[.$\mu_1$ [.$\mu_1\mu_1$ ]
		[.$\mu_1\mu_2$ ]
		[.$\mu_1\mu_3$ ]]
		[.$\mu_2$ [.$\mu_2\mu_1$ ]
		[.$\mu_2\mu_2$ ]
		[.$\mu_2\mu_3$ ]]
		[.$\mu_3$ [.$\mu_3\mu_1$ ]
		[.$\mu_3\mu_2$ ]
		[.$\mu_3\mu_3$ ]]]}
	\caption{Part of the tilling of $T_{3}$ by $S'$}
	\label{fig: S' tilling}
\end{figure}

Consider now the bijection $f\colon L(S) \to L(S')$ with $f(\lambda_i) = \mu_i$ for $i=1,2,3$, which defines a homeomorphism $\phi\colon \mathbb{Z}_2 \to \mathbb{Z}_3$. In concrete terms, given $x \in \mathbb{Z}_2$ we consider its $2$-adic expansion $w$, which is an infinite string in the alphabet $\Sigma_{2}$, then the $3$-adic expansion of $\phi(x)$ is obtained from $w$ by applying $f$ to its prefixes $\lambda_i$. In other words, one must identify inside $w$ the blocks of digits corresponding to each of the words $\lambda_i$, and transform these blocks according to $f$. For example, the image of $w=0001101\ldots=\lambda_1\lambda_2\lambda_3\lambda_2\ldots$ is $\mu_1\mu_2\mu_3\mu_2\ldots = 0121\ldots$

Finally, we give example of several auto-homeomorphisms of $\mathbb{Z}_2$ arising from choosing different tilings of $T_2$. Recall that, since $p=2$, $\#L(S) = 1+s$ with $s=\#(V(S)\setminus L(S)) \in \mathbb{Z}_{>0}$. If we take $s=2$, there are only two such tiles, shown in Figures~\ref{fig: tile T2 s=2} and~\ref{fig: tile T2 s=2 sim}, which moreover are isomorphic as trees. Taking for example the bijection $f(\lambda_i) = \mu_i$, we obtain the homeomorphism of $\mathbb{Z}_2$ mapping the blocks: $00$ to $11$, $01$ to $10$ and $1$ to $0$; which is nothing but the transposition of the digits $0$ and $1$. If we take the bijection $f(\lambda_1) = \mu_2$, $f(\lambda_2) = \mu_1$ and $f(\lambda_3) = \mu_3$, the corresponding homemorphism is no longer a permutation since on blocks it maps $00$ to $10$, $01$ to $11$ and $1$ to $0$.

\begin{figure}[h]
\centering
\begin{minipage}{.5\textwidth}
\centering
\scalebox{1}{\Tree[.$\epsilon$ [.$v$ [.$\lambda_1$ ][.$\lambda_2$ ]]
[.$\lambda_3$ ]]}
\caption{A tile $S$ of $T_2$}
\label{fig: tile T2 s=2}
\end{minipage}%
\begin{minipage}{.5\textwidth}
\centering
\scalebox{1}{\Tree[.$\epsilon$ [.$\mu_3$ ][.$w$ [.$\mu_2$ ][.$\mu_1$ ]]
]}
\caption{A tile $S'$ of $T_2$}
\label{fig: tile T2 s=2 sim}
\end{minipage}
\end{figure}

For $s=3$, we can consider the tiles from Figures~\ref{fig: tile T2 s=3 A} and~\ref{fig: tile T2 s=3 B}, which are no longer isomorphic as trees. Choosing $f(\lambda_i) = \mu_i$, we get the map on blocks of digits given by: $00$ to $000$, $01$ to $001$, $10$ to $01$ and $11$ to $1$. For example, $\lambda_1\lambda_2\lambda_3\lambda_4\lambda_2\ldots = 0001101101\ldots$ is mapped to $\mu_1\mu_2\mu_3\mu_4\mu_2\ldots = 000001011001\ldots$

\begin{figure}[h]
\centering
\begin{minipage}{.45\textwidth}
\centering
\scalebox{1}{\Tree[.$\epsilon$ [.$v$ [.$\lambda_1$ ][.$\lambda_2$ ]][.$w$ [.$\lambda_3$ ][.$\lambda_4$ ]]
]}
\caption{A tile of $T_2$}
\label{fig: tile T2 s=3 A}
\end{minipage}%
\begin{minipage}{.55\textwidth}
\centering
\scalebox{1}{\Tree[.$\epsilon$ [.$v'$ [.$w'$ [.$\mu_1$ ][.$\mu_2$ ]][.$\mu_3$ ]]
[.$\mu_4$ ]]}
\caption{A tile of $T_2$}
\label{fig: tile T2 s=3 B}	
\end{minipage}
\end{figure}




\end{document}